\newcommand{\PP}{\mathbb{P}}
\newcommand{\seq}{\subseteq}
\newcommand{\C}{\mathbb{C}}
\newtheorem{thm}{Theorem}[section]
\newtheorem*{thm-nl}{Theorem}
\newtheorem*{prop-nl}{Proposition}
\def\proj{{\textbf P}}
\def\H{\mathcal{H}}
\def\Pic0{{\rm Pic}^0(X)}
\newtheorem*{cor-nl}{Corollary}
\newtheorem{conjecture}[thm]{Conjecture}
\newtheorem*{conjecture-nl}{Conjecture}
\newtheorem*{quest-nl}{Question}
\newtheorem*{quests-nl}{Questions}
\newtheorem{prop}[thm]{Proposition}
\theoremstyle{remark}
\title{Syzygies of curves beyond Green's Conjecture}
\author{Michael Kemeny}
\begin{document}
\bibliographystyle{plain}

\maketitle

\begin{abstract}
We survey three results on syzygies of curves beyond Green's conjecture, with a particular emphasis on drawing connections between the study of syzygies and other topics in moduli theory.
\end{abstract}

\section{Introduction}
Arguably \emph{the} central concept of modern commutative algebra is that of a minimal free resolution. Let $S$ be either a local Noetherian ring or a positively graded, finitely generated algebra over a field. The task is then to describe the shape of the minimal free resolutions of various classes of $S$ modules. 

In this survey, we restrict attention to the graded ring
$$ R=\C[x_0, \ldots, x_m],$$
with grading $R_d=\{ \text{polynomials of degree $d$} \}.$
There is one particular class of graded $R$-modules which are the most interesting to algebraic geometers, namely those of the form
$$\Gamma_X(L):= \bigoplus_{n \geq 0} H^0(X,L^{\otimes n}),$$
where $X$ is a projective variety, $L$ is a very ample line bundle with $m$ sections, and where $\Gamma_X(L)$ is a $R\simeq \text{Sym}(H^0(X,L))$ module in the natural way. 

By the Hilbert Syzygy Theorem, any finitely generated, graded $R$ module $M$ has a minimal free resolution
$0 \leftarrow M \leftarrow F_0 \leftarrow F_1 \leftarrow \ldots \leftarrow F_k \leftarrow 0$
of length at most $k \leq m$. Decomposing each term 
$$F_i= \bigoplus_j R(-i-j)^{b_{i,j}(M)}$$
into its graded pieces, this resolution defines invariants $b_{i,j}(M)$ of the module, called the \emph{Betti numbers} of $M$.
\begin{quest-nl}
In the case $M=\Gamma_X(L)$, what geometric information about $X$ can be gleaned from the Betti numbers $b_{i,j}(X,L):=b_{i,j}(\Gamma_X(L))$?
\end{quest-nl}

The case which has received the most attention is when the variety is a projective curve $C$ of genus $g$ and $L=\omega_C$. For a projective curve, the geometric information one is most interested in is perhaps the \emph{gonality} $\text{gon}(C)$, which is the least degree $d$ such that there exists a degree $d$ cover $C \to \PP^1$ (we call such a cover a \emph{minimal pencil}). In practice, on sometimes needs to replace gonality with a slightly refined invariant, called the \emph{Clifford index} and defined as 
$$\text{Cliff}(C):=  \text{min} \; \{ \deg{M}-2h^0(M)+2 \; | \; M\in \text{Pic}(C), \; \deg(M) \leq g-1, \;  h^0(M) \geq 2 \}.$$
There is always the bound $\text{Cliff}(C) \leq \text{gon}(C)-2$, which, at least conjecturally, is an equality for most curves, \cite{ELMS}.
Here one has the celebrated:
\begin{conjecture-nl}[Green's Conjecture, \cite{green-koszul}]
We have the following vanishing of quadratic Betti numbers: $$b_{p,2}(C,\omega_C)=0 \; \;\text{for $p<\text{Cliff}(C)$}.$$
\end{conjecture-nl}
Here $\text{Cliff}(C)$ denotes the \emph{Clifford index} of the curve $C$. This conjecture provides a sweeping generalisation of the following classical results of Noether and Petri:
\begin{thm-nl} [Noether, Petri]
Assume $C$ is not hyperelliptic. Then the canonical embedding $C \hookrightarrow \PP^{g-1}$ is projectively normal. If, further, $C$ does not admit a degree three cover of $\PP^1$, then $I_{C/\PP^{g-1}}$ is generated by quadrics.
\end{thm-nl}

The first serious approach to Green's conjecture, due to Schreyer \cite{schreyer}, relies on the observation that if a curve admits a minimal pencil $f: C \to \PP^1$ of degree $d$, then the canonical curve $C$ lies on the rational normal scroll $X \seq \PP^{g-1}$ which can be geometrically described as the union of the span of the divisors $f^{-1}(p)$ in $\PP^{g-1}$ as $p \in \PP^1$ varies. The minimal free resolution of the scroll $X$ can be described by an \emph{Eagon--Northcott} resolution:
\begin{thm-nl}[Eagon--Northcott \cite{eagon-northcott}]
Let $R$ be a ring and $f: R^r \to R^s$ be a ring homomorphism for $r \geq s$. There is a complex
$$0 \leftarrow R \xleftarrow{\wedge^s f} \wedge^s R^r \leftarrow S_1^* \otimes \wedge^{s+1}R^r \leftarrow \ldots \leftarrow S_{r-s}^* \otimes \wedge^r R^r \leftarrow  0$$
where $S=R[x_1, \ldots, x_s]$, which is exact if and only if $\text{depth}\; (I_s(f))= r-s+1$.
\end{thm-nl}
\noindent In the theorem above, $I_{j}(f)$ denotes the ideal generated by $j \times j$ minors of $f$, for any $1 \leq j \leq s$.

By restriction to the canonical curve, the Eagon--Northcott resolution injects into the linear strand of the minimal resolution of the curve. This translates Green's Conjecture into the prediction that the length of the Eagon--Northcott resolution equals the length of the linear strand of the canonical curve.

 Eagon--Northcott resolutions form an important class of resolutions in their own right, and the work of Buchsbaum and Eisenbud on understanding and generalising these resolutions led to several results which have been seminal to the development of modern commutative algebra. For example, the famous \emph{Criterion for Exactness} came out of this:
 \begin{thm-nl}[Buchsbaum-Eisenbud \cite{buchsbaum-eisenbud-what}]
 Let $R$ be a ring and let
 $$F_{\bullet} \; : \; F_0 \xleftarrow{f_1} F_1 \xleftarrow{f_2} F_2 \leftarrow \ldots \xleftarrow{f_n} F_n \leftarrow 0$$
 be a complex of free $R$ modules.
 Assume \begin{enumerate}
 \item $\text{rank}\;F_i=\text{rank}\;f_i+\text{rank}\;f_{i+1}$
 \item if $I(f_i)\neq R$ then $\text{depth}\; I(f_i)\geq i$
 \end{enumerate}
 Then $F_{\bullet}$ is exact.
 \end{thm-nl}
 Here $I(f_i)$ is defined to be $I_{\text{rk}f_i}(f_i)$.
Using this criterion, Buchsbaum and Eisenbud construct resolutions generalising the Eagon--Northcott resolution in \cite{buchsbaum-eisenbud-generic}.

Perhaps surprisingly, some of the most effective tools for approaching Green's Conjecture have come from geometry rather than algebra. It was observed early on by Green and Lazarsfeld that there is an intimate connection between Green's Conjecture and the theory of K3 surfaces and moduli spaces of sheaves on such surfaces. This connection has proven to be surprisingly deep and has significantly influenced the subsequent development of K3 surface theory. As just one example, the following fundamental theorem came about as a verification of a prediction from Green's Conjecture:
\begin{thm-nl}[Green--Lazarsfeld \cite{green-lazarsfeld-special}] 
Let $X$ be a K3 surface and $L \in \text{Pic}(X)$ a base point free line bundle. Then $\text{Cliff}(C)$ is constant amongst all smooth curves $C \in |L|$.
 \end{thm-nl} In another direction, the study of syzygies of curves has proven to be remarkably important for the study of the birational geometry of the moduli space of curves, see \cite{farkas-popa}, \cite{farkas-syzygies}. 

In a landmark pair of papers, Green's Conjecture was eventually proven for a \emph{generic} curve of arbitrary genus by C.\ Voisin, \cite{voisin-even}, \cite{voisin-odd}. Voisin's proof relied on a new interpretation of the problem in terms of the Hilbert scheme of points on a K3 surface:
\begin{thm-nl}[Voisin]
Let $X$ be a complex projective variety and $L$ a line bundle. Then $b_{p,1}(X,L)$ equals the corank of the natural map
$$H^0(X^{[p+1]}_{\text{curv}},\text{det}\; L^{[p+1]}) \to H^0(\mathcal{I}_{p+1},(q^* \text{det}\; L^{[p+1]})_{|_{\mathcal{I}_{p+1}}})$$
where $X^{[p+1]}_{\text{curv}}$ is the curvilinear locus in the Hilbert scheme of points, $L^{[p+1]}$ is the tautological bundle, $\mathcal{I}_{p+1} \seq X^{[p+1]}_{\text{curv}}\times X$ is the incidence variety and $q: \mathcal{I}_{p+1}\to X^{[p+1]}_{\text{curv}}$ is the projection.
\end{thm-nl}

In this survey, we outline some new avenues of research going out in various directions from Green's Conjecture, with a focus on demonstrating the connections between the study of syzygies and other aspects of moduli theory. The first of these directions is the
Secant Conjecture of Green and Lazarsfeld,  \cite{green-lazarsfeld-projective}. This conjecture gives a condition for the vanishing of quadratic syzygies of a curve embedded by a nonspecial line bundle $L$. The Secant Conjecture generalises the following well-known theorem of Castelnuovo--Mumford in much the same way as Green's conjecture generalises the Theorem of Noether--Petri:
\begin{thm}[Castelnuovo--Mumford]
Let $L$ be a very ample line bundle on a curve with $ \deg(L) \geq 2g+1$ then $\phi_L: C \hookrightarrow \PP^r$ is projectively normal.
\end{thm}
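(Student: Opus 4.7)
The projective normality of $\phi_L : C \hookrightarrow \PP^r$, where $r = h^0(L) - 1$, is equivalent to the surjectivity of the multiplication maps $\mu_n : H^0(L) \otimes H^0(L^n) \to H^0(L^{n+1})$ for every $n \geq 1$, or equivalently to the vanishing $H^1(\PP^r, \I_{C/\PP^r}(n)) = 0$ for $n \geq 1$. The hypothesis $\deg L \geq 2g+1 > 2g-2$ ensures via Serre duality that $H^1(L^n) = 0$ for every $n \geq 1$. My plan is to use the base-point-free pencil trick, which handles the range $n \geq 2$ immediately; the genuine work is concentrated in the base case $n = 1$.

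Since $L$ is very ample, one may choose a $2$-dimensional base-point-free subspace $V \seq H^0(L)$. The evaluation $V \otimes \OO_C \twoheadrightarrow L$ has kernel $L^{-1}$, yielding the short exact sequence
$$0 \to L^{-1} \to V \otimes \OO_C \to L \to 0.$$
Tensoring with $L^n$ and passing to cohomology, the cokernel of $V \otimes H^0(L^n) \to H^0(L^{n+1})$ embeds into $H^1(L^{n-1})$. For $n \geq 2$ one has $\deg L^{n-1} \geq \deg L \geq 2g+1 > 2g-2$, so $H^1(L^{n-1}) = 0$ and even the smaller map $V \otimes H^0(L^n) \to H^0(L^{n+1})$ is surjective; a fortiori $\mu_n$ is surjective.

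The main obstacle is the case $n = 1$: here the relevant cohomology group is $H^1(\OO_C) \cong \C^g$, which is typically nonzero, so any single pencil $V$ produces only a codimension-$g$ subspace $V \cdot H^0(L) \seq H^0(L^2)$. The plan is to vary $V$ and show that $\sum_V V \cdot H^0(L)$, which equals $\mathrm{Im}(\mu_1)$, exhausts $H^0(L^2)$. Concretely, each base-point-free pencil $V$ determines a connecting homomorphism $\delta_V : H^0(L^2) \to H^1(\OO_C)$ realized as cup product with an extension class $e_V \in H^1(L^{-2})$, and one has $\mathrm{Im}(\mu_1) = \sum_V \ker \delta_V$. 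Thus it suffices to show that as $V$ ranges over base-point-free pencils in $|L|$, the classes $e_V$ span $H^1(L^{-2}) \cong H^0(\omega_C \otimes L^2)^{\ast}$, equivalently that no nonzero functional on $H^0(L^2)$ is killed by every $\delta_V$. This spanning property is the crux; I would attack it by describing $e_V$ explicitly in terms of the divisors $\mathrm{div}(s)$ for $s \in V$, and then invoking a general position argument in which the hypothesis $\deg L \geq 2g+1$ provides the flexibility needed to realize sufficiently many distinct pencils.
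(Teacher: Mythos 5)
The paper states this theorem of Castelnuovo--Mumford without proof, so your argument must stand on its own. Your reduction of the cases $n\geq 2$ to the base-point-free pencil trick is correct: tensoring $0\to L^{-1}\to V\otimes \OO_C\to L\to 0$ with $L^{n}$ puts the cokernel of $V\otimes H^0(L^{n})\to H^0(L^{n+1})$ inside $H^1(L^{n-1})$, which vanishes as soon as $(n-1)\deg L>2g-2$. You also correctly identify $n=1$ as the only genuine difficulty. But precisely there the proposal stops being a proof: the asserted spanning of the extension classes $e_V$ in $H^1(L^{-2})$ is labelled ``the crux'' and then deferred to an unspecified general position argument. Note that up to that point the hypothesis $\deg L\geq 2g+1$ has not been used in any essential way (any degree $>2g-2$ gives the $n\geq 2$ cases), so the step where the numerical bound actually enters is exactly the step that is missing. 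The reduction itself is also not airtight: a functional $\phi$ on $H^0(L^2)$ vanishes on $\ker(\,\cdot\cup e_V)$ if and only if $\phi$ lies in the image of $\cup\, e_V\colon H^0(\omega_C)\to H^1(\omega_C\otimes L^{-2})$, so what you need is $\bigcap_V \mathrm{Im}(\cup\, e_V)=0$, which is not literally the statement that the $e_V$ span $H^1(L^{-2})$; that implication would still require an argument.

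The standard way to close the gap is Castelnuovo's general position argument (equivalently, Green's $H^0$-lemma). Write $d=\deg L$ and $r=h^0(L)-1=d-g$. Choose $r-1$ general points and set $D=x_1+\cdots+x_{r-1}$; then $h^0(L(-D))=2$ and $L(-D)$ is base point free of degree $g+1$. The pencil trick applied to $M=L(-D)$ gives $0\to\OO_C(D)\to H^0(L(-D))\otimes L\to L^2(-D)\to 0$, and since $\deg D=d-g-1\geq g$ with the points general, $H^1(\OO_C(D))=0$; hence $H^0(L^2(-D))\seq\mathrm{Im}(\mu_1)$. The quotient $H^0(L^2)/H^0(L^2(-D))$ has dimension $r-1$, and it is exhausted by the classes of products $t_i\cdot u_i$ where $t_i$ vanishes at all $x_j$ with $j\neq i$ but not at $x_i$, and $u_i(x_i)\neq 0$. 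This is where $d\geq 2g+1$ is used twice: once to guarantee $\deg D\geq g$, and once to guarantee enough sections to separate the $x_i$. In the language of Green's $H^0$-lemma (surjectivity of $H^0(E)\otimes H^0(L)\to H^0(E\otimes L)$ whenever $L$ is base point free and $h^1(E\otimes L^{-1})\leq h^0(L)-2$), the case $E=L$ requires exactly $g\leq d-g-1$, i.e.\ $d\geq 2g+1$. I would recommend replacing your spanning strategy by this argument, or at least supplying a complete proof of the spanning claim together with the missing link between spanning and the surjectivity of $\mu_1$.
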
 
Green and Lazarsfeld proved that one can replace the bound in Castelnuovo--Mumford's theorem with $ \deg(L) \geq 2g+1-\text{Cliff}(C)$. The Secant Conjecture then extends the above results to higher syzygies. 

The second direction we discuss is the Prym--Green Conjecture. For a general canonical curve $C \hookrightarrow \proj^{g-1}$ the generic Green's Conjecture as proved by Voisin suffices to describe the shape of the free resolution of the homogeneous coordinate ring of $C$. The Prym--Green conjecture likewise predicts a similar shape for the homogeneous coordinate ring of a general \emph{paracanonical} curve, that is a curve embedded by a twist of the canonical line bundle by a torsion line bundle.

Lastly, we come back to Schreyer's original approach and consider the question of whether all the syzygies in the last position of the linear strand of a canonical curve come from the syzygies of the scrolls associated to the minimal pencils of the curve. Our approach to this question is a blend of Schreyer's original approach using the Eagon--Northcott complex with the approach of Hirschowitz--Ramanan, \cite{hirschowitz-ramanan}, which suggests that one should construct and study appropriate divisors on moduli spaces. In our case the moduli spaces are spaces of stable maps to $\proj^m$, which have some peculiarities in comparison with the moduli space of curves, and several natural questions are left open.

\noindent {\bf Acknowledgments:} It is a pleasure to thank the organisers of the Abel Symposium 2017 for a wonderful conference in a spectacular location. The results in this survey are joint work with my coauthor Gavril Farkas, who has taught me much of what I know about syzygies. I also thank D.\ Eisenbud and F.-O.\ Schreyer for enlightening conversations on these topics.

This survery is an amalgamation of material taken from my course on syzygies in Spring 2017 as well as talks given at UCLA and Berkeley in Autumn 2017. In particular, I thank Aaron Landesman for several corrections and improvements to my course notes.

\section{The Eagon--Northcott Complex}
It is quite rare for one to able to construct an explicit free resolution of a module, so those few families of resolutions which we have are much prized. Perhaps the most well known and useful resolutions is given by the \emph{Koszul complex}. Let $R$ be a ring and $\{f_1, \ldots, f_r \}$ a sequence of elements in $R$. Let $\bar{f}: R^r \to R$ be the map sending the $i^{th}$ basis element $e_i$ of $R^r$ to $f_i$ for $1 \leq i \leq r$. The Koszul complex associated to $\bar{f}$ is the complex
$$K_{\bullet}(\bar{f}) \; : \;  R \xleftarrow{\bar{f}} R^r \xleftarrow{d} \wedge^2 R^r \xleftarrow{d} \ldots \leftarrow \wedge^r R^r \leftarrow 0$$
where the differential is defined by
$$d(e_{i_1} \wedge \ldots \wedge e_{i_p})=\sum_{j=1}^p (-1)^{j+1} f_{i_j} \; e_{i_1}\wedge \ldots \wedge \hat{e}_{i_j} \wedge \ldots \wedge e_{i_p}.$$

The Eagon--Northcott complex was the very influential discovery that one could generalise the construction of the Koszul complex and associate a complex to any ring homomorphism $\bar{g}: R^r \to R^s$ with $r \geq s$. To describe how this works, we follow \cite{eisenbud-com} to construct the complex as a strand of a \emph{graded} Koszul complex. Consider the graded polynomial ring
$$S:=R[x_1,\ldots,x_s].$$ We may identify $S_1$ with $R^s$ via the standard basis. Setting
$$F:=S(-1)^{\oplus r} \simeq (R^r \otimes_R S)(-1),$$
then $\bar{g}$ defines a morphism $\tilde{g} :  F \to S$
of graded $S$ modules. Explicitly, if $e_1, \ldots, e_r$ is a basis of $R^r$, then $\widetilde{g}(e_i \otimes 1)=\bar{g}(e_i) \in R^s \simeq S_1$. Taking the Koszul complex of $\widetilde{g}$, we get
$$K_{\bullet}(\widetilde{g}) \; : \; S \leftarrow F \leftarrow \wedge^2 F \leftarrow \ldots \leftarrow \wedge^r F \leftarrow 0.$$
This is a \emph{graded} complex and taking the $k^{th}$ graded piece of this complex yields 
$$K_{\bullet}(\widetilde{g})_k \; : \;  S_k \xleftarrow{\delta} S_{k-1} \otimes_R R^r \xleftarrow{\delta} S_{k-2} \otimes \wedge^2 R^r \leftarrow \ldots \leftarrow S_{k-r} \otimes \wedge^r R^r \leftarrow 0.$$
Dualizing this and using the identification $\wedge^i R^r \simeq \wedge^{r-i}(R^r)^*$ we get a complex
$K^*_{\bullet}(\widetilde{g})_k \; : \;  S^*_{k-r} \xleftarrow{d} S^*_{k-r+1} \otimes R^r \leftarrow \ldots \leftarrow S^*_{k} \otimes \wedge^r R^r \leftarrow 0.$
Now set $k=r-s$. The first $s$ terms $S^*_{-s},\ldots, S^*_{-1}$ are all zero, so we get a complex
$$\wedge^s R^r \xleftarrow{d} S^*_1 \otimes \wedge^{s+1} R^r \leftarrow \ldots \leftarrow S^*_{r-s}\otimes \wedge^rR^r \leftarrow 0.$$
The Eagon--Northcott complex 
$$EN(\bar{g}) \; : \;  R \xleftarrow{\wedge^s \bar{g}} \wedge^s R^r \xleftarrow{d} S^*_1 \otimes \wedge^{s+1} R^r \leftarrow \ldots \leftarrow S^*_{r-s}\otimes \wedge^rR^r \leftarrow 0,$$
is then obtained by extending the above complex by $\bar{g}$ (it remains a complex).

Let $I_j(\bar{g})$ denote the ideal generated by the $j \times j$ minors of $\bar{g}$. 
\begin{thm-nl}[\cite{eagon-northcott}]
The complex $EN(\bar{g})$ is exact if and only if $\text{depth} \; I_s(\bar{g}) =r-s+1$.
\end{thm-nl}
For example, in the special case $s=1$, this gives us the well-known statement that $K_{\bullet}(\bar{f})$ is exact if and only if $\{f_1, \ldots, f_r \}$ forms a regular sequence.

Let us now restrict attention to the graded polynomial ring $R=\C[x_0, \ldots, x_m]$ and set $V=R_1$, which is an $m+1$ dimensional complex vector space. Then $\{ x_0, \ldots, x_m \}$ forms a regular sequence, and and taking the Koszul complex produces the resolution 
$$ 0 \leftarrow \C \leftarrow R \leftarrow V \otimes_{\C} R \leftarrow \wedge^2 V \otimes R \leftarrow \ldots \leftarrow \wedge^{m+1} V \otimes R \leftarrow 0,$$
of $\C \simeq R/ (x_0,\ldots, x_m)$. Let $M$ be a graded $R$ module, and consider the minimal free resolution $$0 \leftarrow M \leftarrow F_0 \leftarrow F_1 \leftarrow \ldots \leftarrow F_k \leftarrow 0.$$
The \emph{Betti numbers} of $b_{i,j}(M)$ are defined to be the $(i+j)^{th}$ graded piece of $\text{Tor}_R^i$ $$b_{i,j}(M):= \dim \text{Tor}_R^i(M,\C)_{i+j}.$$ In terms of the minimal free resolution above, $F_i= \bigoplus_j R(-i-j)^{b_{i,j}(M)}$. By tensoring the Koszul resolution of $\C$ by $M$ and using symmetry of $\text{Tor}$, one immediately obtains a more concrete description of the syzygy spaces $\text{Tor}_R^i(M,\C)_{i+j}$:
\begin{prop} \label{koszul-coh}
The $(i,j)^{th}$ syzygy space $\text{Tor}_R^i(M,\C)_{i+j}$ is the middle cohomology of 
$$\bigwedge^{i+1}V \otimes M_{j-1} \to  \bigwedge^i V \otimes M_j \to \bigwedge^{i-1}V \otimes M_{j+1}, $$
where the maps are the Koszul differentials and $V=R_1$.
\end{prop}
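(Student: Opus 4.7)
The plan is to derive this as a direct computation of $\mathrm{Tor}$ using the Koszul resolution of $\C$ that was just displayed, combined with the symmetry of $\mathrm{Tor}$. The key point is that although $\mathrm{Tor}_R^i(M,\C)$ is defined via a resolution of $M$, it can be equally computed from a resolution of $\C$, and the Koszul resolution has an especially transparent shape.

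First, I would recall that the sequence $x_0,\ldots,x_m$ is regular in $R=\C[x_0,\ldots,x_m]$, so the graded Koszul complex
$$0 \leftarrow \C \leftarrow R \leftarrow V\otimes R \leftarrow \wedge^2 V\otimes R \leftarrow \cdots \leftarrow \wedge^{m+1}V\otimes R \leftarrow 0$$
is in fact a minimal graded free resolution of $\C$, where the $i$-th term should be regarded as $\wedge^i V\otimes_\C R(-i)$ so that the differentials are degree-preserving. Next, by the symmetry $\mathrm{Tor}_R^i(M,\C)\simeq \mathrm{Tor}_R^i(\C,M)$, I can compute the right-hand side by tensoring this Koszul resolution with $M$ over $R$ and taking homology. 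Since $\wedge^i V\otimes_\C R(-i) \otimes_R M \simeq \wedge^i V\otimes_\C M(-i)$, the resulting complex is simply
$$\cdots \leftarrow \wedge^{i-1}V\otimes_\C M(-i+1) \leftarrow \wedge^i V\otimes_\C M(-i) \leftarrow \wedge^{i+1}V\otimes_\C M(-i-1) \leftarrow \cdots,$$
with differentials induced by the Koszul differential (i.e.\ contraction against $x_0,\ldots,x_m$ paired with multiplication in $M$).

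The final step is to extract the correct graded piece. Taking the degree $(i+j)$-component of the displayed complex produces exactly
$$\wedge^{i+1}V\otimes M_{j-1} \to \wedge^{i}V\otimes M_{j} \to \wedge^{i-1}V\otimes M_{j+1},$$
and its middle cohomology equals $\mathrm{Tor}_R^i(\C,M)_{i+j}$, which by symmetry of $\mathrm{Tor}$ is $\mathrm{Tor}_R^i(M,\C)_{i+j}$.

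There is no serious obstacle here; the only thing that demands care is bookkeeping of the internal grading, in particular remembering the twist $(-i)$ on the $i$-th term of the Koszul resolution so that the differentials preserve degree. Once that is set up correctly, the proposition falls out immediately.
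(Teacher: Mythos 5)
Your proposal is correct and follows exactly the argument the paper indicates: tensoring the Koszul resolution of $\C$ by $M$, invoking the symmetry of $\mathrm{Tor}$, and extracting the degree $(i+j)$ graded piece. The careful bookkeeping of the twist $(-i)$ on the $i$-th Koszul term is the right detail to flag, and nothing further is needed.
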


\section{Rational Normal Scrolls and Schreyer's Approach} \label{Schreyer}
In this section we discuss Schreyer's approach to Green's conjecture using rational normal scrolls.
Let $C$ be a smooth, projective curve of genus $g$, and let
$$R:=\text{Sym}\; H^0(C,\omega_C) \simeq \C[x_0,\ldots,x_{g-1}].$$
We are interested in the Betti numbers of the graded $R$ module
$$\Gamma_{C}(\omega_C):=\bigoplus_{n \geq 0} H^0(C,\omega^{\otimes n}).$$
We define $$b_{i,j}(C,\omega_C):=b_{i,j}(\Gamma_{C}(\omega_C)).$$
Suppose $C$ has gonality $d$ and let $f  : C \to \PP^1$ be a minimal pencil, i.e.\ suppose $\phi$ has the minimal degree $d$. It was observed by Schreyer \cite{schreyer} and Green--Lazarsfeld \cite[Appendix]{green-koszul}, that the minimal pencil imposes conditions on the possible values of the Betti numbers $b_{p,1}(C,\omega_C)$. In this section, we describe Schreyer's approach.

Suppose $C$ is not hyperelliptic, so that $|\omega_C|$ embeds the curve $C$ in $\proj^{g-1}$. If $f: C \to \PP^1$ is a minimal pencil, then for a general $p \in \PP^1$, the divisor $f^{-1}(p)$ is a finite set of $d$ points in $\PP^{g-1}$.
By geometric Riemann--Roch, the dimension of the span $<f^{-1}(p)>$ is given by
$$ \dim<f^{-1}(p)>=d-h^0(C,f^{*}\mathcal{O}_{\PP^1}(1))=d-2. $$
Now consider the union
$$X_f:=\bigcup_{p \in \PP^1} <f^{-1}(p)>.$$
Then $X_f \seq \PP^{g-1}$ is smooth, $d-1$ dimensional projective variety known as a \emph{rational normal scroll} which furthermore contains the curve $C \seq \PP^{g-1}$. Rational normal scrolls have the minimal possible degree
$$\deg(X)=1+\text{codim}X,$$
and as such they have been widely studied since Bertini classified varieties of minimal degree in 1907, \cite{bertini}.

 Following \cite{eisenbud-harris-centennial}, one may give a determinantal description of the variety $X_f$. Let $u,v$ be a basis of $H^0(C,f^*\mathcal{O}_{\PP^{g-1}}(1))$ and $y_1, \ldots, y_{s}$ a basis of $H^0(C,\omega_C \otimes f^*\mathcal{O}_{\PP^{g-1}}(-1))$, where $s=g+1-d$. Consider the $2 \times (g+1-d)$ matrix
 $$ \begin{bmatrix}
    uy_1       &  \dots & uy_{s} \\
    vy_1       &  \dots & vy_{s} \\
   
\end{bmatrix}:=A.$$
The canonical embedding gives an identification $H^0(C,\omega_C) \simeq H^0(\PP^{g-1},\mathcal{O}(1))$ and we may therefore consider $A$ as a matrix of linear forms, that, is as a morphism $$R^{s}(-1) \xrightarrow{A} R^{\oplus 2}.$$ Then $X_f$ is defined be the locus of $2 \times 2$ minors of $A$, or, equivalently, is the locus where $A$ does not have full rank. The homogeneous coordinate ring $\mathcal{O}_{X_f}$ of $X_f$ is then the cokernel of 
$$(\bigwedge^2 R^{s})(-2) \xrightarrow{\wedge^2 A} \bigwedge^2 R^2\simeq R.$$
Since $X_f$ has codimension $g-d$ in $\PP^{g-1}$, $\text{depth}\; I_2(A)=g-d$, so the Eagon--Northcott complex gives a free resolution
\begin{align*}
0 &\leftarrow \mathcal{O}_{X_f} \leftarrow R \xleftarrow{\wedge^2 A} \bigwedge^2 R^{s}(-2) \leftarrow (R^2)^* \otimes \wedge^3 R^s(-3) \leftarrow \text{Sym}_2(R^2)^* \otimes \wedge^4R^s(-4) \\
&\leftarrow \ldots \leftarrow \text{Sym}_{d-2}(R^2)^*\otimes \wedge^sR^s(-s) \leftarrow 0.
\end{align*}
As all differentials in the above resolution are matrices with linear entries, the graded Nakayama lemma immediately implies that this Eagon--Northcott complex is minimal. In particular, the scroll $X_f$ has Betti numbers
$$b_{p,1}(X_f,\mathcal{O}(1))=p {g+1-d \choose p+1},$$
with $b_{p,q}(X_f,\mathcal{O}(1))=0$ for $q \geq 2$. The restriction $\mathcal{O}_{X_f} \twoheadrightarrow \Gamma_C(\omega_C)$ induces \emph{injective} maps
$$\text{Tor}^p(\mathcal{O}_{X_f},\C)_{p+1} \hookrightarrow \text{Tor}^p(\Gamma_C(\omega_C),\C)_{p+1}$$
and hence we derive the bounds $$b_{p,1}(C,\omega_C) \geq p {g+1-d \choose p+1}.$$ In particular, 
$$b_{g-d,1}(C, \omega_C) \geq g-d.$$ Schreyer's Conjecture states that, under appropriate conditions, this is an equality.
\begin{conjecture-nl}[Schreyer]
Suppose $C$ is a curve of gonality $3 \leq d \leq \frac{g+1}{2}$. Assume $C$ has a unique minimal pencil $f: C \to \PP^1$ and that further the Brill--Noether locus $W^1_d(C)=\{ f^*\mathcal{O}(1) \}$ is reduced. Assume furthermore that $f^*\mathcal{O}(1)$ is the unique line bundle achieving the Clifford index. Then
$$b_{g-d,1}(C, \omega_C) = g-d.$$
\end{conjecture-nl}
The condition $d \leq \frac{g+1}{2}$ is precisely that $C$ have non-maximal gonality. 

Any curve satisfying the assumptions and conclusion of Schreyer's Conjecture must also satisfy Green's Conjecture. Indeed, under the assumptions $\text{Cliff}(C)=d-2$ and Green's Conjecture is the statement $b_{d-3,2}(C,\omega_C)=0$. Using Serre duality, this is equivalent to $b_{g-d+1,1}C, \omega_C)=0$. But if $b_{g-d,1}(C, \omega_C) = g-d$, then $\text{Tor}^{g-d}(\mathcal{O}_{X_f},\C)_{g-d+1} \to \text{Tor}^{g-d}(\Gamma_C(\omega_C),\C)_{g-d+1}$ is surjective. But then any linear relation amongst the syzygies in $\text{Tor}^{g-d}(\Gamma_C(\omega_C),\C)_{g-d+1}$ would also be a relation amongst syzygies in $\text{Tor}^{g-d}(\mathcal{O}_{X_f},\C)_{g-d+1}$. As there are no such relations, we must have $b_{g-d+1,1}C, \omega_C)=0$. 

In \cite{schreyer-large}, Schreyer verified his conjecture for general curves of gonality $d$ with $g \gg d$. This has since been verified for general curves of gonality $3 \leq d \leq \frac{g+1}{2}$ in \cite{linear-syz}.

\section{Lattice Polarised K3 Surfaces and the Secant Conjecture}
Consider a projective K3 surface $$X \seq \PP^{g}.$$ If $H \seq \PP^g$ is a hyperplane then the adjunction formula implies $$C:=X \cap H \seq \PP^{g-1} \simeq H$$ is a canonical curve. This simple observation has
deep applications to the study of syzygies of canonical curves. To explain this should be the case, we first introduce some new notation. Let $Y$ be any smooth projective variety and $L,M$ line bundles on $Y$ with $L$ base point free and ample. Set $R:=\text{Sym}\;H^0(Y,L)$ and consider the graded $R$ module 
$$\Gamma_Y(M,L):=\bigoplus_{n \in \mathbb{Z}} H^0(Y,L^{\otimes n} \otimes M).$$ We define $b_{i,j}(Y,M,L):=b_{i,j}(\Gamma_Y(M,L))$.

\begin{prop}[Hyperplane Restriction Theorem, \cite{green-koszul}, \cite{kemeny-farkas}]
Let $X$ be a K3 surface and let $L,H$ be line bundles with $H$ effective and base point free. Assume either (i) $L \simeq \mathcal{O}_X$ or (ii) $(H \cdot L)>0$ and
$H^1(X,qH-L)=0$ for $q \geq 0$. Then for each smooth curve $D \in |H|$
$$b_{p,q}(X,-L,H)=b_{p,q}(D,-L_D,\omega_D)$$
for all $p,q$.
\end{prop}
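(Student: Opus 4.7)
I would prove the equality by reducing both sides to Koszul cohomology (Proposition \ref{koszul-coh}) and connecting them through a short exact sequence of graded modules induced by restriction from $X$ to $D$. Throughout, set $R := \text{Sym}\,H^0(X,H)$ and fix a section $s_D \in H^0(X,H) = R_1$ cutting out $D$; by adjunction $H|_D \cong \omega_D$, so $R/(s_D) \cong \text{Sym}\,H^0(D,\omega_D) =: R_D$.

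Twisting the restriction sequence $0 \to \mathcal{O}_X(-H) \to \mathcal{O}_X \to \mathcal{O}_D \to 0$ by $nH - L$ and passing to cohomology, the hypothesis $H^1(X, qH - L) = 0$ for $q \geq 0$ (which in case (i) reduces to the standard K3 vanishings for a base point free $H$) yields, for every $n \geq 1$,
\begin{equation*}
0 \to H^0(X,(n-1)H - L) \to H^0(X, nH - L) \to H^0(D, n\omega_D - L_D) \to 0.
\end{equation*}
The case $n = 0$ is also exact: in case (ii) all three terms vanish for numerical reasons, while in case (i) it reads $0 \to 0 \to \mathbb{C} \to \mathbb{C} \to 0$ with the restriction an isomorphism. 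Assembling these gives a short exact sequence of graded $R$-modules
\begin{equation*}
0 \to \Gamma_X(-L, H)(-1) \xrightarrow{\,\cdot s_D\,} \Gamma_X(-L, H) \to \Gamma_D(-L_D, \omega_D) \to 0,
\end{equation*}
where the rightmost module is regarded as an $R$-module via $R \twoheadrightarrow R_D$.

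Applying $\text{Tor}^R(-, \mathbb{C})$: since $s_D \in R_1$ annihilates $\mathbb{C}$, multiplication by $s_D$ induces the zero map on Tor, so the long exact sequence breaks into short exact sequences relating $\text{Tor}^R_p(\Gamma_X(-L,H),\mathbb{C})$ and $\text{Tor}^R_p(\Gamma_D(-L_D,\omega_D),\mathbb{C})$. To compare Betti numbers of $\Gamma_D(-L_D,\omega_D)$ over $R$ with those over $R_D$, pick a splitting $V := H^0(X,H) = \mathbb{C} s_D \oplus V_D$, decomposing $\bigwedge^p V = \bigwedge^p V_D \oplus (s_D \wedge \bigwedge^{p-1} V_D)$. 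Since $s_D$ acts as zero on $\Gamma_D(-L_D,\omega_D)$, the Koszul complex from Proposition \ref{koszul-coh} splits accordingly, yielding
\begin{equation*}
\dim \text{Tor}^R_p(\Gamma_D(-L_D,\omega_D),\mathbb{C})_{p+q} = b_{p,q}(D,-L_D,\omega_D) + b_{p-1,q}(D,-L_D,\omega_D).
\end{equation*}
Substitution produces the recursion
\begin{equation*}
b_{p,q}(X,-L,H) + b_{p-1,q}(X,-L,H) = b_{p,q}(D,-L_D,\omega_D) + b_{p-1,q}(D,-L_D,\omega_D),
\end{equation*}
from which induction on $p$, with both sides vanishing for $p<0$, gives the desired equality. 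The main technical obstacle I anticipate is the careful bookkeeping of grading shifts in the Koszul decomposition and the Tor long exact sequence, together with a uniform treatment of cases (i) and (ii) across the various vanishings needed.
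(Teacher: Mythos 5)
Your proposal is correct and follows essentially the same route as the paper: the short exact sequence $0 \to \Gamma_X(-L,H)(-1) \xrightarrow{\cdot s_D} \Gamma_X(-L,H) \to \Gamma_D(-L_D,\omega_D) \to 0$, vanishing of multiplication by $s_D$ on $\text{Tor}$, comparison of $\text{Tor}$ over $R$ and over $\text{Sym}\,H^0(D,\omega_D)$, and induction on $p$ via the resulting recursion. The only difference is cosmetic: you prove the two standard splitting lemmas directly (via the Koszul decomposition $\bigwedge^p V = \bigwedge^p V_D \oplus s_D \wedge \bigwedge^{p-1} V_D$) where the paper cites \cite{green-koszul}, and you spell out the cohomological vanishings giving the module sequence in slightly more detail.
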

\begin{proof}
Let $R=\text{Sym} \;H^0(X,H)$. Let $s \in H^0(X,H)$ define $D$. By our assumptions, we have a short exact sequence 
$$0 \to \Gamma_X(-L,H)(-1) \xrightarrow{\otimes s} \Gamma_X(L,H) \to \Gamma_D(L_D,\omega_D) \to 0$$
of $R$ modules. Taking the $(p+q)^{th}$ graded piece of the long exact sequence of $\text{Tor}_R(\; ,\C)$
\begin{align*}
&\to \text{Tor}_R ^p(\Gamma_X(-L,H),\C)_{p+q-1} \xrightarrow{\otimes s} \text{Tor}_R ^p(\Gamma_X(-L,H),\C)_{p+q} \to \text{Tor}_R ^p(\Gamma_D(-L_D,\omega_D),\C)_{p+q}\\
&\to \text{Tor}_R ^{p-1}(\Gamma_X(-L,H),\C)_{p+q-1} \to
\end{align*}
The map $\text{Tor}_R ^p(\Gamma_X(-L,H),\C)_{p+q-1} \xrightarrow{\otimes s} \text{Tor}_R ^p(\Gamma_X(-L,H),\C)_{p+q}$ is zero, \cite[\S 1.6.11]{green-koszul}.
Thus $$\text{Tor}_R ^p(\Gamma_D(-L_D,\omega_D),\C)_{p+q}\simeq \text{Tor}_R ^p(\Gamma_X(-L,H),\C)_{p+q} \oplus \text{Tor}_R ^{p-1}(\Gamma_X(-L,H),\C)_{p+q-1}.$$
Let $R':=\text{Sym} \; H^0(D,\omega_D)$. Using the exact sequence
$$0 \to \C \to H^0(X,H) \to H^0(D,\omega_D) \to 0$$
one sees $$\text{Tor}_R ^p(\Gamma_D(-L_D,\omega_D),\C)_{p+q} \simeq \text{Tor}_{R'} ^p(\Gamma_D(-L_D,\omega_D),\C)_{p+q} \oplus \text{Tor}_{R'} ^{p-1}(\Gamma_D(-L_D,\omega_D),\C)_{p+q-1}.$$
Thus $b_{p,q}(X,-L,H)+b_{p-1,q}(X,-L,H)=b_{p,q}(D,-L_D,\omega_D)+b_{p-1,q}(D,-L_D,\omega_D)$. The claim now follows by induction on $p$, since $b_{-1,q}(X,-L,H)=b_{-1,q}(D,-L_D,\omega_D)=0$.
\end{proof}

The Hyperplane Restriction Theorem is very powerful due to the combination of the following two facts:
\begin{enumerate}
\item Thanks to Voisin's groundbreaking work  as well as the work of Aprodu--Farkas, Green's Conjecture is now known for \emph{any} curve on a K3 surface,  \cite{voisin-even}, \cite{voisin-odd}, \cite{aprodu-farkas-green}.
\item By the Global Torelli Theorem for K3 surfaces, given any even lattice $\Lambda$ of signature $(1,\rho-1)$ with $\rho \leq 10$, there is a nonempty moduli space of K3 surfaces $X$ with $\text{Pic}(X) \simeq \Lambda$, \cite{dolgachev}.
\end{enumerate}
Item $(2)$ gives a very powerful method for constructing examples of curves with prescribed properties. We will illustrate how this works with an application to the Secant Conjecture of Green--Lazarsfeld.

A line bundle $L$ on a curve is said to satisfy property $(N_p)$ if we have the vanishings $$b_{i,j}(C,L)=0 \text{ for} \; i \leq p, \; j \geq 2.$$ In terms of the classical projective geometry, then $\phi_L: C \hookrightarrow \proj^r$ is projectively normal if and only if $L$ satisfies $(N_0)$, whereas the ideal $I_{C/\PP^r}$ is generated by quadrics if, in addition, it satisfies $(N_1)$. The line bundle $L$ is called \emph{$p$-very ample} if for every effective divisor $D$ of degree $p+1$ the evaluation map $$ev: H^0(C,L) \to H^0(D,L_{|_D})$$ is surjective. 

\begin{conjecture}[Secant Conjecture, \cite{green-lazarsfeld-projective}] \label{secant-conj}
Let $L$ be a globally generated line bundle of degree $d$ on a curve $C$ of genus $g$ such that
$$d \geq 2g+p+1-2h^1(C,L)-\text{Cliff}(C).$$
Then $(C,L)$ fails property $(N_p)$ if and only if $L$ is not $p+1$-very ample.
\end{conjecture}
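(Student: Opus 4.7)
The plan is to attack the hard direction of the Secant Conjecture, namely that $(p+1)$-very ampleness of $L$ together with the degree hypothesis forces $(N_p)$, by specialising to curves on lattice-polarised K3 surfaces and combining the Hyperplane Restriction Theorem with Voisin's proof of Green's Conjecture for K3 curves and Dolgachev's existence theorem for K3 surfaces of prescribed Picard lattice. The easy direction is standard: if $L$ is not $(p+1)$-very ample then some effective divisor $\Gamma \seq C$ of degree $p+2$ spans only a $\PP^{p}$ inside $\PP^{r}$, and a Koszul-cohomology computation in the sense of Proposition~\ref{koszul-coh}, following Green--Lazarsfeld, produces a nonzero class in $b_{p,2}(C,L)$.

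For the hard direction, by semicontinuity in a suitable parameter space of pairs $(C,L)$ it suffices to exhibit a single pair $(C_{0},L_{0})$ of the prescribed numerical type $(g,d,\mathrm{Cliff}=c,h^{1}=s)$ for which $L_{0}$ is $(p+1)$-very ample and $(N_p)$ holds. The proposed construction places $(C_{0},L_{0})$ on a K3 surface $X$ with $\mathrm{Pic}(X) \simeq \ZZ H \oplus \ZZ L$, the three intersection numbers $(H^{2}, H\cdot L, L^{2})$ being tuned so that $H$ is base point free of sectional genus $g$, so that $(H\cdot L) = d$, so that $H^{1}(X, qH - L) = 0$ for $q \geq 0$, and so that no class in $\mathrm{Pic}(X)$ witnesses either a smaller Clifford index or a $(p+2)$-secant $p$-plane for $L|_{C_{0}}$. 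Green--Lazarsfeld's constancy theorem then makes the Clifford index identically $c$ on $|H|$, the Torelli/Dolgachev machinery realises such an $X$, and a general smooth $C_{0} \in |H|$ together with $L_{0} := L|_{C_{0}}$ is the required pair.

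To verify $(N_p)$ I would apply the HRT in the form $b_{p,q}(C_{0}, -L_{0}, \omega_{C_{0}}) = b_{p,q}(X, -L, H)$. The rank-two Picard lattice then lets me look at the same K3 from the $L$-side: a general smooth $C' \in |L|$ has $\omega_{C'} = L|_{C'}$ by adjunction, so Voisin's theorem gives Green's Conjecture for $C'$ and a second application of the HRT converts this into K3 syzygy vanishings. Combining these with Green-type duality on $C_{0}$, which relates the twisted canonical module $\Gamma_{C_{0}}(-L_{0},\omega_{C_{0}})$ that HRT outputs to the module $\Gamma_{C_{0}}(L_{0})$ governing $(N_p)$, yields the desired vanishing $b_{i,2}(C_{0},L_{0}) = 0$ for $i \leq p$. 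The degree hypothesis $d \geq 2g + p + 1 - 2s - c$ is exactly what makes the duality land in the correct range.

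The main obstacle I anticipate is the lattice arithmetic: with essentially one free integer $L^{2}$ one must simultaneously arrange $h^{1}(C_{0},L_{0})=s$, preserve the HRT cohomology vanishings, and avoid introducing spurious $(-2)$-classes or effective decompositions that would either lower the Clifford index of $C_{0}$ or destroy $(p+1)$-very ampleness of $L_{0}$. A secondary difficulty, hidden in the duality step, is that the translation between the twisted canonical module produced by the HRT and the module relevant to $(N_p)$ is clean only when $L_{0}$ is sufficiently positive, and checking that the bound in the statement is precisely the correct threshold requires careful Koszul bookkeeping.
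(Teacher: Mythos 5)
The statement you are asked to prove is a \emph{conjecture}: it is open in the generality stated, and the paper establishes only the generic case (Theorem~\ref{generic-secant}). Your argument cannot close this gap, and the reason is located precisely in your first reduction. Semicontinuity of Betti numbers in a family of pairs $(C,L)$ shows that if one member has $b_{p,2}=0$ then the \emph{general} member does; it says nothing about an arbitrary pair satisfying the numerical hypotheses, and the conjecture quantifies over all such pairs. Moreover the locus of pairs with prescribed $(g,d,\mathrm{Cliff}(C),h^1(L))$ need not be irreducible, so even the phrase ``general pair of the prescribed numerical type'' is delicate. So at best your construction proves the statement for a general curve and a general line bundle, which is exactly the content of Theorem~\ref{generic-secant}, and your route to it is essentially the paper's: lattice-polarised K3 surfaces via Dolgachev/Torelli, the Hyperplane Restriction Theorem, and Voisin's resolution of Green's Conjecture for K3 curves.

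Within that generic framework there are two further concrete gaps. First, the exact sequence $0 \to \Gamma_X(-C,L) \to \Gamma_X(L) \to \Gamma_C(L_C) \to 0$ reduces $b_{p,2}(C,L_C)=0$ to \emph{two} vanishings on $X$: $b_{p,2}(X,L)=0$ and $b_{p-1,3}(X,-C,L)=0$. Restricting to a curve $D\in|L|$, the first follows from Green's Conjecture for $D$, but the second is a vanishing of a \emph{weight-three} Betti number of a twisted canonical module, $b_{p-1,3}(D,\mathcal{O}_D(-C),\omega_D)$, which Green's Conjecture does not give; the paper needs the theorem of Farkas--Musta\c{t}\u{a}--Popa on $b_{j-1,3}(D,-(\omega_D\otimes\eta),\omega_D)$ for $\eta\notin C_{g-j-1}-C_j$. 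Your ``Green-type duality'' step is where this missing ingredient is hiding, and without it the argument does not close. Second, a rank-two lattice with one free parameter $L^2$ cannot realise all numerical types $(g,d)$ allowed by the conjecture; the paper first reduces, via the observation that $b_{p,2}(C,L)=0$ implies $b_{p-1,2}(C,L(-x))=0$ for general $x$ together with the characterisation of when a general $L$ is $(p+1)$-very ample ($d\geq g+2p+3$), to exactly two extremal cases ($g=2i+1$, $d=2p+2i+4$ and $g=2i$, $d=2p+2i+3$), and only those are produced on K3 surfaces. You would need to add both the point-removal reduction and the Farkas--Musta\c{t}\u{a}--Popa input to make the generic statement go through.
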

It is rather straightforward to see that if $L$ is not $p+1$ very ample then $b_{p,2}(C,L) \neq 0$. The harder part is to go in the other direction. 

In the case $h^1(C,L) \neq 0$, then the Secant Conjecture reduces to Green's Conjecture, so we will focus on the case of a \emph{non-special} line bundle $L$, i.e.\ one with $h^1(C,L)=0$.
\begin{thm}[\cite{kemeny-farkas}] \label{generic-secant}
The Secant Conjecture holds for a general curve $C$ of genus $g$ and a general line bundle $L$ of degree $d$ on $C$.
\end{thm}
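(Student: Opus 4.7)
The plan is to reduce Theorem \ref{generic-secant} to an explicit syzygy vanishing on a lattice polarized K3 surface, and then to access that vanishing via the Hyperplane Restriction Theorem together with Voisin's theorem on Green's conjecture for curves on K3 surfaces.

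The ``only if'' direction of Conjecture \ref{secant-conj} --- that $L$ failing to be $(p+1)$-very ample forces $b_{p,2}(C,L)\neq 0$ --- is essentially formal, as already noted above: one produces a non-trivial quadratic syzygy by Koszul cohomology from a $(p+1)$-secant $p$-plane, and this construction is stable in families. So the content of the theorem is the converse: under the bound $d\geq 2g+p+1-\mathrm{Cliff}(C)$ and $(p+1)$-very ampleness of $L$, one must show $b_{p,2}(C,L)=0$. Since $b_{p,2}$ is upper semicontinuous in families of pairs $(C,L)$, it suffices to exhibit a single pair $(C_0,L_0)$ with the required vanishing that represents a generic point in the parameter space of pairs of genus $g$ curves equipped with a degree $d$ line bundle.

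The construction of $(C_0,L_0)$ goes through lattice polarized K3 surfaces. By the surjectivity of the period map, one finds a K3 surface $X$ with $\mathrm{Pic}(X)\cong \ZZ H\oplus \ZZ M$ and intersection numbers $H^2=2g-2$, $H\cdot M=d$, and $M^2$ chosen so that on a smooth $C_0\in |H|$ the restriction $L_0:=M_{C_0}$ is non-special of degree $d$. A Lazarsfeld-type Brill--Noether argument for K3 sections shows that, as the lattice polarized K3 varies in its moduli, $C_0$ is Brill--Noether general with the expected Clifford index and $L_0$ covers a dense subset of $\mathrm{Pic}^d(C_0)$, so $(C_0,L_0)$ is indeed generic. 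A Koszul--Serre duality applied to the (projectively normal) embedding of $C_0$ by $L_0$ rewrites the target vanishing $b_{p,2}(C_0,L_0)=0$ as a vanishing of the form $b_{p',q'}(C_0,-L_0,\omega_{C_0})=0$, and --- after verifying the cohomological hypotheses $H^1(X,qH-L)=0$ on $\mathrm{Pic}(X)$ --- the Hyperplane Restriction Theorem transports this to the K3 side as $b_{p',q'}(X,-L,H)=0$, with $L$ a suitable integer combination of $H$ and $M$. This final vanishing is delivered by Voisin's theorem in its lattice polarized incarnation.

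The main technical obstacle is the combined lattice engineering: one must pick $M^2$ and the configuration of $H$, $M$ so that simultaneously (i) all cohomological hypotheses of the Hyperplane Restriction Theorem hold on $X$, (ii) the natural rational map from the moduli of such polarized K3s to $\overline{\MAC{M}}_g$ dominates, so that $C_0$ can be treated as a generic genus $g$ curve with the expected Clifford index, and (iii) $L_0$ sweeps out a dense subset of $\mathrm{Pic}^d(C_0)$. One expects the range of $(g,d,p)$ appearing in the conjecture to split into cases, each requiring a slightly different lattice choice or an auxiliary degeneration, and controlling this case analysis --- while keeping the K3 Betti number computation within reach of Voisin's theorem --- is the most delicate part of the argument.
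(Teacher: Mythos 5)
Your overall strategy (semicontinuity, lattice-polarized K3 surfaces, the Hyperplane Restriction Theorem, known syzygy vanishings on K3 sections) is the right one, and your treatment of the ``only if'' direction and of the reduction to a single good pair $(C_0,L_0)$ matches the paper. But two of your concrete steps do not work as described. First, the ``Koszul--Serre duality'' you invoke does not change the polarizing line bundle: duality for Koszul cohomology of a curve turns $b_{p,2}(C_0,L_0)$ into a Betti number of a \emph{twist} of the module, still computed over $\mathrm{Sym}\,H^0(C_0,L_0)$ --- it cannot convert syzygies of the embedding by $L_0$ into syzygies of the form $b_{p',q'}(C_0,-L_0,\omega_{C_0})$ over $\mathrm{Sym}\,H^0(\omega_{C_0})$. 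In the paper the change of polarization never happens on $C$ at all: one keeps the polarization $L$ throughout, uses the plain restriction sequence $0 \to \Gamma_X(-C,L) \to \Gamma_X(L) \to \Gamma_C(L_C) \to 0$ over $R=\mathrm{Sym}\,H^0(X,L)$ to reduce to the two K3 vanishings $b_{p,2}(X,L)=b_{p-1,3}(X,-C,L)=0$, and only then applies the Hyperplane Restriction Theorem to an \emph{auxiliary} smooth curve $D\in|L|$ (of genus $2p+3$, not $g$), where adjunction $L|_D\simeq\omega_D$ does the change of polarization for free.

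Second, and more seriously, the final vanishing you need is \emph{not} ``delivered by Voisin's theorem in its lattice polarized incarnation.'' Green's conjecture for curves on K3 surfaces (Voisin, Aprodu--Farkas) only controls the untwisted canonical Betti numbers, i.e.\ it gives $b_{p,2}(D,\omega_D)=0$ and hence handles $b_{p,2}(X,L)$. The twisted vanishing $b_{p-1,3}(X,-C,L)=0$, equivalently $b_{p-1,3}(D,\mathcal{O}_D(-C),\omega_D)=0$, lies outside the scope of Green's conjecture and requires the separate theorem of Farkas--Musta\c{t}\u{a}--Popa on $b_{j-1,3}(D,-(\omega_D\otimes\eta),\omega_D)$ for $\eta\notin D_{g-j-1}-D_j$, applied here to $\eta=L_D^*(C)$ of degree $2i-2p\le 2$. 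Without this input your argument has no way to kill the twisted K3 Betti number. You are also missing the mechanism that pins down the lattices: the elementary equivalence of generic $(p+1)$-very ampleness with $d\ge g+2p+3$, combined with the implication $b_{p,2}(C,L)=0\Rightarrow b_{p-1,2}(C,L(-x))=0$ for general $x$, reduces the whole conjecture to the two extremal numerical cases $(g,d)=(2i+1,2p+2i+4)$ and $(2i,2p+2i+3)$, which is what determines the specific intersection matrices used.
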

An elementary argument shows that if $C$ is general then the general $L \in \text{Pic}^d(C)$ is $(p+1)$-very ample if and only if
$$d \geq g+2p+3.$$
Using this inequality and the fact that if $L$ is a globally generated, nonspecial, line bundle with $b_{p,2}(C,L)=0$ then $b_{p-1,2}(C,L(-x))=0$ for a general $x \in C$, Theorem \ref{generic-secant} reduces to finding a general curve $C$ together with a non-special line bundle $L \in \text{Pic}^d(C)$ with $b_{p,2}(C,L)=0$ in the following two cases
\begin{enumerate}
\item $g=2i+1$, $d=2p+2i+4$, $p \geq i-1$
\item $g=2i$, $d=2p+2i+3$, $p \geq i-1$.
\end{enumerate}

We construct such curves $C$ and line bundles $L$ using lattice polarized K3 surfaces. Consider first the odd genus case $g=2i+1$. Let $\Lambda=\mathbb{Z}[C]\oplus \mathbb{Z}[L]$ be a lattice with intersection pairing
$$\begin{pmatrix}
(C)^2 & (C \cdot L)\\
(C \cdot L) & (L)^2
\end{pmatrix}=\begin{pmatrix}
4i & 2p+2i+4\\
2p+2i+4 & 4p+4
\end{pmatrix}.$$
Consider a general K3 surface $X$ with $\text{Pic}X \simeq \Lambda$. We need to prove that $b_{p,2}(C,L_{C})=0$. From the short exact sequence
$$0 \to \Gamma_X(-C,L) \to \Gamma_X(L) \to \Gamma_C(L_{C}) \to 0$$
we get
$$ \to \text{Tor}^p_R(\Gamma_X(L),\C)_{p+2} \to \text{Tor}^p_R(\Gamma_C(L_C),\C)_{p+2} \to \text{Tor}^{p-1}_R(\Gamma_X(-C,L),\C)_{p+2} \to $$
for $R= \text{Sym}\; H^0(X,L) \simeq \text{Sym}\; H^0(C,L_C).$ So it suffices to prove $b_{p,2}(X,L)= b_{p-1,3}(X,-C,L)=0$, or, by the Hyperplane Restriction Theorem
$$b_{p,2}(D,\omega_D)= b_{p-1,3}(D,\mathcal{O}_D(-C),\omega_D)=0$$
where $D \in |L|$ is a smooth curve. 

An easy computation shows $D$ has genus $2p+3$ and Clifford index $p+1$, so the vanishing $b_{p,2}(D,\omega_D)=0$ follows from Green's Conjecture. For the vanishing $b_{p-1,3}(D,\mathcal{O}_D(-C),\omega_D)=0$, we need to replace the use of Green's Conjecture with the following important result:
\begin{thm}[\cite{farkas-mustata-popa}]
Let $C$ be a non hyperelliptic curve of genus $g$ and $\eta \in \text{Pic}^{g-2j-1}(C)$ a line bundle such that $\eta \notin C_{g-j-1}-C_j$ for some $1 \leq j \leq \frac{g-1}{2}$. Then $$b_{j-1,3}(C,-(\omega_C\otimes\eta),\omega_C)=0.$$
\end{thm}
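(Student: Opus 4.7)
The plan is to apply Green--Lazarsfeld duality to convert the stated Betti vanishing into a linear syzygy vanishing for $\eta$ on the canonical curve $C\subseteq\PP^{g-1}$, and then to use the tautological kernel bundle $M_{\omega_C}$ together with the hypothesis $\eta\notin C_{g-j-1}-C_j$ to deduce it. First, Green duality on a smooth curve reads
$$K_{p,q}(C, B;\omega_C)^{\vee} \cong K_{g-2-p,\, 2-q}(C, \omega_C\otimes B^{-1};\omega_C).$$
Applied with $p=j-1$, $q=3$, and $B=(\omega_C\otimes\eta)^{-1}$, this rewrites the quantity $b_{j-1,3}(C,-(\omega_C\otimes\eta),\omega_C)$ as $\dim K_{g-j-1,-1}(C,\omega_C^{2}\otimes\eta;\omega_C)$, and two applications of the tautological degree shift $K_{p,q}(C,\omega_C\otimes B';\omega_C)=K_{p,q+1}(C,B';\omega_C)$ identify the latter with $K_{g-j-1,1}(C,\eta;\omega_C)$. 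Hence it suffices to show this linear Koszul cohomology group vanishes.

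Next, I would use the tautological kernel bundle $M_{\omega_C}$ defined by $0\to M_{\omega_C}\to H^0(\omega_C)\otimes\mathcal{O}_C\to\omega_C\to 0$. The hypothesis $\eta\notin C_{g-j-1}-C_j$ immediately forces $h^0(\eta)=0$: if $\eta$ were represented by an effective divisor $E'$, then for any $D\in C_j$ the divisor $E'+D$ would be effective of degree $g-j-1$, so $\eta=(E'+D)-D\in C_{g-j-1}-C_j$. With $h^0(\eta)=0$ in hand, the long exact sequences coming from the exterior-power filtration of $M_{\omega_C}$ identify
$$K_{g-j-1,1}(C,\eta;\omega_C)\hookrightarrow H^1\bigl(C,\wedge^{g-j}M_{\omega_C}\otimes\eta\bigr).$$
Serre duality combined with the isomorphism $\wedge^{g-j}M_{\omega_C}^{\vee}\cong\wedge^{j-1}M_{\omega_C}\otimes\omega_C$, coming from $\det M_{\omega_C}\cong\omega_C^{-1}$ and $\operatorname{rank} M_{\omega_C}=g-1$, then shows that the target is Serre-dual to
$$H^0\bigl(C,\wedge^{j-1}M_{\omega_C}\otimes\omega_C^{2}\otimes\eta^{-1}\bigr).$$
So it suffices to show this space of sections is trivial.

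Finally, the heart of the argument is to extract geometric information from a putative nonzero section of $\wedge^{j-1}M_{\omega_C}\otimes\omega_C^{2}\otimes\eta^{-1}$. Using the pointwise description $M_{\omega_C}\otimes k(p)\cong H^0(\omega_C(-p))\subseteq H^0(\omega_C)$ and working relatively over the symmetric product $C_j$, one aims to produce from such a section an effective divisor $D\in C_j$ with $h^0(C,\eta\otimes\mathcal{O}_C(D))>0$, i.e.\ an expression $\eta=E-D$ with $E\in C_{g-j-1}$. This realizes $\eta\in C_{g-j-1}-C_j$, contradicting the hypothesis. The main obstacle will be carrying out this last identification rigorously: it requires a controlled cohomology-and-base-change analysis on the universal divisor $\Delta\subseteq C\times C_j$, together with a determinantal argument matching sections of the wedge power against effective twists of $\eta$. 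This step is in the spirit of Ein--Lazarsfeld's theory of secant loci and the geometric interpretation of linear syzygies via secant varieties, and forms the technical core of the proof.
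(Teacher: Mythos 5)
This theorem is quoted in the survey from \cite{farkas-mustata-popa} without proof, so I am judging your proposal against the argument in that source. Your first two steps are correct and are indeed how one proceeds: Green's duality for Koszul cohomology of a curve (the hypotheses of the duality theorem are vacuous in dimension one) together with the degree shift reduces the claim to $K_{g-j-1,1}(C,\eta;\omega_C)=0$, and your deduction of $h^0(\eta)=0$ from $\eta\notin C_{g-j-1}-C_j$ is exactly right. The problem is in your third step. Once you know $h^0(\eta)=0$, the kernel-bundle sequence $0\to\wedge^{g-j-1}M_{\omega_C}\otimes\omega_C\otimes\eta\to\wedge^{g-j-1}H^0(\omega_C)\otimes\omega_C\otimes\eta\to\cdots$ gives the \emph{exact} identification $K_{g-j-1,1}(C,\eta;\omega_C)\cong H^0(\wedge^{g-j-1}M_{\omega_C}\otimes\omega_C\otimes\eta)\cong H^0(\wedge^{j}M_{\omega_C}^{\vee}\otimes\eta)$, and this last group is precisely what Farkas--Musta\c{t}\u{a}--Popa prove is nonzero if and only if $\eta\in C_{g-j-1}-C_j$. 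You instead embed the Koszul group into $H^1(\wedge^{g-j}M_{\omega_C}\otimes\eta)$ and propose to kill that entire $H^1$, equivalently (by the Serre duality you state) the space $H^0(\wedge^{j-1}M_{\omega_C}\otimes\omega_C^{2}\otimes\eta^{-1})$. That is a strictly stronger statement and it is false: the cokernel of your injection is the kernel of $\wedge^{g-j}H^0(\omega_C)\otimes H^1(\eta)\to H^1(\wedge^{g-j-1}M_{\omega_C}\otimes\omega_C\otimes\eta)$, and $h^1(\eta)=2j\neq 0$. Concretely, for $j=1$ your target is $H^0(\omega_C^{2}\otimes\eta^{-1})$, a nonspecial line bundle of degree $3g-1$, so this space has dimension $2g$ and never vanishes, even though the theorem holds in that case. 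So the reduction as you state it cannot be completed; you must work with the exact Koszul group, not the ambient $H^1$.

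Your final paragraph correctly identifies where the real content lies --- converting a section of the relevant exterior power of the (dual) Lazarsfeld--Mukai bundle into a decomposition $\eta=E-D$ with $E\in C_{g-j-1}$, $D\in C_j$ --- but it is only a statement of intent. In \cite{farkas-mustata-popa} this is carried out by realizing $\wedge^{j}M_{\omega_C}^{\vee}\otimes\eta$ inside a tautological construction over the symmetric product $C_j$ (restriction to the universal divisor and a filtration argument), and the equivalence of $H^0(\wedge^{j}M_{\omega_C}^{\vee}\otimes\eta)\neq 0$ with $\eta\in C_{g-j-1}-C_j$ is the technical heart of their paper. As written, your proposal has the right overall architecture but reduces to a false intermediate statement and leaves the essential step unproved.
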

This implies that that for a general line bundle $\eta$ of degree $\leq 2$, $b_{p-1,3}(D,-(\omega_D \otimes \eta),\omega_D)=0$, where the above result further specifies what is meant by ``general''. We apply this in our case to the line bundle $L^*_D(C)$ which has degree $2i-2p \leq 2$.

The case of even genus $g=2i$ is similar. In this case one uses K3 surfaces with Picard lattice
$$\begin{pmatrix}
4i-2 & 2p+2i+3\\
2p+2i+3 & 4p+4
\end{pmatrix}.$$

\section{The Wahl Map and the Prym--Green Conjecture}
Consider the moduli space $\mathcal{R}_{g,\ell}$ parametrising pairs $(C,\tau)$ of a smooth, genus $g$ curve together with a torsion bundle $\tau$ of order exactly $\ell$. This is an irreducible moduli space which admits a compactification $\overline{\mathcal{R}}_{g,\ell}$, \cite{chiodo-stable}, \cite{chiodo-farkas}. Here are a few reasons one might be interested in 
$\mathcal{R}_{g,\ell}$:
\begin{enumerate}
\item $\mathcal{R}_{g,\ell}$ can be considered as a higher genus analogue of the modular curve parametrizing elliptic curves plus $\ell$-torsion line bundles and as such ought to be interesting from a number-theoretic point of view.
\item The moduli space $\mathcal{R}_{g,\ell}$ is very closely related to the stack of $\ell$-spin curves $\{ (C,L) \; | \; L^{\otimes \ell} \simeq \omega_C \}$ and the two spaces are often considered together, \cite{chiodo-stable}. The space of $\ell$-spin curves has important applications to Gromov--Witten theory, \cite{witten-matrix}.
\item In the case $\ell=2$, the space $\mathcal{R}_{g,2}$ has been much studied in relation to Abelian varieties, due to a construction of Prym which associates an Abelian variety to a point $(C,\tau) \in \mathcal{R}_{g,2}$, \cite{mumford-prym}.
\end{enumerate}
Let us explain $(3)$ in more detail. There is the \emph{Prym map} $$P_g \; : \; \mathcal{R}_{g,2} \to \mathcal{A}_{g-1}$$ defined as follows. Let $(C,\tau) \in \mathcal{R}_{g,2}$ be a point and consider the associated double cover
$$\nu \; : \; \widetilde{C} \to C$$
which has the property $\nu_*\mathcal{O}_{\widetilde{C}} \simeq \mathcal{O}_C \oplus \tau$. Pushforward of divisors defines the \emph{Norm map}
$$\text{Nm}_{\nu} \; : \; \text{Pic}^{2g-2}(\widetilde{C}) \to \text{Pic}^{2g-2}(C).$$
Then $\text{Nm}_{\nu}^{-1}(\omega_C)$ has two isomorphic connected components. The Abelian variety $P_g(C,\tau)$ is the component
$$\{ L \in \text{Nm}_{\nu}^{-1}(\omega_C) \; : \; h^0(C,L)=0 \; \; \text{mod $2$} \}$$
with principal polarization given by the Theta divisor
$$\Theta := \{ L \in \text{Nm}_{\nu}^{-1}(\omega_C) \; : \; h^0(C,L)\geq 2,\;  h^0(C,L)=0 \; \; \text{mod $2$} \}$$

If $(C,\tau) \in \mathcal{R}_{g,\ell}$, then the associated \emph{paracanonical curve} is the embedded curve
$$\phi_{\omega_C \otimes \tau} \; : \; C \hookrightarrow \PP^{g-2}.$$
In the case $\ell=2$, Mumford noticed that there was a close relationship between the projective geometry of a general paracanonical curve and the geometry of the Prym map. Indeed, the differential of the Prym map at a point $(C,\tau) \in \mathcal{R}_{g,2}$
$$d P_g \; : \; H^0(C,\omega_C^{\otimes 2})^* \to (\text{Sym}^2 \; H^0(C,\omega_C \otimes \tau))^*$$
is injective if and only if the multiplication map 
$$\text{Sym}^2 \; H^0(C,\omega_C \otimes \tau) \to H^0(C,\omega_C^{\otimes 2})$$
is surjective, i.e.\ if and only if the corresponding paracanonical curve is projectively normal. Using a degeneration argument, Beauville verified that this indeed holds for the general $(C,\tau) \in \mathcal{R}_{g,2}$, provided $g \geq 6$, \cite{beauville-prym}. Debarre went one step further and showed that the ideal $I_{C/{\PP^{g-2}}}$ is generated by quadrics for $(C,\tau) \in \mathcal{R}_{g,2}$ general and $g \geq 9$, \cite{debarre-torelli}. Using this, he was able to conclude that $P_g$ is in fact generically injective for $g \geq 9$.

It is tempting to imitate Green's conjecture and try to generalize the Beauville--Debarre results to higher syzygies. This is achieved in the following result, which answers affirmatively a conjecture of Farkas--Ludwig \cite{farkas-ludwig} and Chiodo--Eisenbud--Farkas--Schreyer, \cite{chiodo-eisenbud-farkas-schreyer}:
\begin{thm}[\cite{pgodd-all}] \label{pgodd-all}
Let $g=2i+5$ be odd and $(C,\tau) \in \mathcal{R}_{g,\ell}$ general. Then $$b_{i+1,1}(C,\omega_C\otimes \tau)=b_{i-1,2}(C,\omega_C\otimes \tau)=0.$$
\end{thm}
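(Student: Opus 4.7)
My plan is to deduce both vanishings for a general pair $(C,\tau) \in \mathcal{R}_{g,\ell}$ by exhibiting a single pair for which they hold and invoking the irreducibility of $\mathcal{R}_{g,\ell}$ together with semicontinuity of Betti numbers. In keeping with the philosophy of Section 4, the natural way to construct such a pair is to realize $C$ as a hyperplane section of a K3 surface with prescribed Picard lattice, so that $\tau$ arises as the restriction of a line bundle on the surface, and then to transfer the problem to a syzygy computation on the K3 via iterated Hyperplane Restriction.

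Concretely, I would look for a K3 surface $X$ with $\text{Pic}(X) \simeq \mathbb{Z}\langle H, N \rangle$ of rank two, with $H^2 = 2g-2 = 4i+8$, $H \cdot N = 0$, and $N^2$ a negative even integer chosen so that for a general smooth $C \in |H|$ the restriction $\tau := \mathcal{O}_C(N)$ lies in $\text{Pic}^0(C)$ and has order \emph{exactly} $\ell$. Orthogonality ensures $\tau$ has degree zero; the precise order can be controlled by using the exact sequence
\[
0 \to \mathcal{O}_X(kN - H) \to \mathcal{O}_X(kN) \to \mathcal{O}_C(kN) \to 0
\]
to translate non-triviality of $\mathcal{O}_C(kN)$ for $0 < k < \ell$ into cohomological statements on $X$ that depend only on the numerical invariants of the lattice. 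By the Torelli theorem, K3 surfaces with such a rank two Picard lattice form a nonempty moduli space for each choice of lattice data.

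Having constructed $(C,\tau)$, the paracanonical bundle is $(H+N)|_C$, and I would iterate the Hyperplane Restriction Theorem as in Section 4. First, the short exact sequence
\[
0 \to \Gamma_X(-H,H+N)(-1) \xrightarrow{\otimes s} \Gamma_X(H+N) \to \Gamma_C(\omega_C \otimes \tau) \to 0
\]
arising from a section $s$ cutting out $C$ converts the Betti numbers $b_{i+1,1}(C,\omega_C\otimes\tau)$ and $b_{i-1,2}(C,\omega_C\otimes\tau)$ into Betti numbers of the form $b_{p,q}(X, -M, H+N)$ for suitable twists $M \in \{0, H\}$. A second hyperplane restriction to a general smooth curve $D \in |H+N|$ reduces these in turn to vanishings of the shape $b_{p,2}(D,\omega_D)=0$ and $b_{p-1,3}(D, -(\omega_D \otimes \eta), \omega_D)=0$, for an explicit line bundle $\eta$ of small degree on $D$ determined by $H|_D$. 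The first is Green's conjecture for the K3 curve $D$, now a theorem of Voisin and Aprodu--Farkas; the second is the Farkas--Mustață--Popa vanishing recalled in the excerpt, applicable once one checks $\eta \notin D_{g'-j-1} - D_j$ for the relevant $j$.

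The principal obstacle is the construction of the K3 surface itself with torsion of exact order $\ell$: arranging that $\tau$ has order precisely $\ell$, rather than some proper divisor, is delicate when $\ell$ is composite and requires a careful combination of discriminant computations within the moduli of lattice-polarized K3 surfaces with a monodromy argument to rule out sporadic specializations. A secondary, more routine difficulty is verifying the non-inclusion $\eta \notin D_{g'-j-1} - D_j$ for the specific $\eta$ arising from the construction; this should fall out of a Brill--Noether dimension count once the Picard lattice of $X$ has been pinned down, possibly aided by a further degeneration within $|H+N|$.
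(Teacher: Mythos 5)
There is a genuine gap, and it sits exactly where you flag your ``principal obstacle'': the construction of the pair $(C,\tau)$ on a K3 surface. With a rank-two lattice $\mathbb{Z}\langle H,N\rangle$, $H\cdot N=0$ and $N^2<0$, the restriction $\mathcal{O}_C(N)$ is indeed of degree zero, but for a general $C\in |H|$ it is simply a non-torsion point of $\mathrm{Pic}^0(C)$: since $\mathrm{Pic}(X)$ is torsion-free, $\ell N$ is a nontrivial class on $X$, and its restriction to $C$ is trivial only under special effectivity conditions (e.g.\ $\ell N$ represented by an effective divisor disjoint from $C$), not as a consequence of the numerical lattice data alone. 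Your claim that the exact order ``can be controlled by cohomological statements on $X$ that depend only on the numerical invariants of the lattice'' is therefore the step that fails. This is precisely the difficulty the survey describes: the K3 strategy you outline is the one carried out in \cite{farkas-kemeny-high}, and it only succeeds when the torsion level $\ell$ is large compared to the genus; it does not yield the theorem for arbitrary $\ell$. (The rest of your architecture --- irreducibility of $\mathcal{R}_{g,\ell}$ plus semicontinuity, reduction via two applications of hyperplane restriction to Green's conjecture for $D$ and to the Farkas--Musta\c{t}\u{a}--Popa vanishing --- is sound and matches the template of Section 4, but it never gets off the ground without the input surface.)

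The proof in \cite{pgodd-all} replaces the K3 surface by a surface that is only a \emph{limit} of K3 surfaces but whose Picard group genuinely contains torsion: one takes $X=\PP(\mathcal{O}_E\oplus\eta)$ over an elliptic curve $E$, with $C\in |gJ_0+f_r|$, and obtains $\tau$ by pulling back an honest $\ell$-torsion point $b-r$ of $\mathrm{Pic}^0(E)$, so that $L_C\simeq K_C+\tau$ with the order of $\tau$ exactly $\ell$ by construction, for every $\ell$. These surfaces resolve limiting K3 surfaces with elliptic singularities and have Brill--Noether--Petri general hyperplane sections, so the degeneration is still ``generic enough''; the key additional ingredient, absent from your proposal, is the further degeneration of $C$ to $J_0\cup D$ with $D\in |(g-1)J_0+f_r|$, which permits an induction on the genus in place of a direct Koszul computation on a K3. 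If you want to salvage a K3-based argument, you should either restrict to $\ell\gg 0$ or explain how to force exact order $\ell$ for small composite $\ell$; no rank-two lattice with $H\cdot N=0$ will do this for a general member of $|H|$.
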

The result above suffices to completely determine \emph{all} Betti numbers $b_{p,q}(C,\omega_C\otimes \tau)$ of a general paracanonical curve of arbitrary level $\ell$ and odd genus $g=2i+5$. Indeed, the \emph{Betti table}, i.e.\ table with $(q,p)^{th}$ entry $b_{p,q} (C,\omega_C\otimes \tau)$ of such a curve is
\begin{table}[htp!]
\begin{center}
\begin{tabular}{|c|c|c|c|c|c|c|c|c|}
\hline
$1$ & $2$ & $\ldots$ & $i-1$ & $i$ & $i+1$ & $i+2$  & $\ldots$ & $2i+2$\\
\hline
$b_{1,1}$ & $b_{2,1}$ & $\ldots$ & $b_{i-1,1}$ & $b_{i,1}$ & 0 & 0 &  $\ldots$ & 0 \\
\hline
$0$ &  $0$ & $\ldots$ & $0$ & $b_{i,2}$ & $b_{i+1,2}$ & $b_{i+2,2}$ & $\ldots$ & $b_{2i+2,2}$\\
\hline
\end{tabular}
\end{center}
 
\end{table}

 $$\text{where \;} b_{p,1}=\frac{p(2i-2p+1)}{2i+3}{2i+4\choose p+1} \ \  \mbox{ if } p\leq i\  \mbox{,} \ \  b_{p,2}=\frac{(p+1)(2p-2i+1)}{2i+3}{2i+4\choose p+2} \  \ \  \mbox{ if } p\geq i.$$
In even genus, it is known that the analogous conjecture does not always hold see \cite{chiodo-eisenbud-farkas-schreyer}, \cite{CFVV}.

In order to prove Theorem \ref{pgodd-all}, one might like to use the method described in the previous section and work with suitable K3 surfaces. This was successfully carried out assuming that the torsion level is large compared to the genus, \cite{farkas-kemeny-high}. When one attempts to prove the result for \emph{all} levels, one immediate difficulty arises. Since the Picard group of a K3 surface is torsion free, it is difficult to construct K3 surfaces containing paracanonical curves in such a way that the torsion bundle $\tau$ comes as a pull-back of a line bundle on the surface, at least if $\ell$ is arbitrary. The solution we take in \cite{pgodd-all} is to look for something which is similar to a K3 surface but for which the Picard group admits torsion. 

To explain how such surfaces come about naturally, we need to describe the work of Wahl on classifying curves lying on a K3 surface, \cite{wahl-square}. Let $C$ be a smooth, complex curve. The \emph{Wahl map}
$$\bigwedge^2H^0(C,\omega_C) \to H^0(C,\omega_C^{\otimes 3}),$$
is defined by the following rule. Choose analytic coordinate charts for $C$. A section $s \wedge t$ is mapped under the Wahl map to the section specified by the formula $(ds)t-t(ds)$ on these charts, see also \cite{wahl-gaussian}.

Let $C \seq \PP^{g-1}$ be a canonical curve and let $$Y \seq \PP^{g},$$
denote the cone over $C$. Wahl discovered his map by studying the graded module of first order deformations of $Y$, \cite{wahl-jac}. The first interesting graded piece of this module is precisely the cokernel of the Wahl map. In particular, if the Wahl map is nonsurjective, and if furthermore a certain obstruction group vanishes, then $Y$ may be deformed to a Gorenstein surface $$X' \seq \PP^{g-1}$$
with $\omega_{X'} \simeq \mathcal{O}_{X'}$ which is not a cone. Such a surface looks somewhat like a K3 surface, with the caveat that it may have nasty singularities. This led Wahl to conjecture:
\begin{conjecture-nl}[Wahl's Conjecture]
Let $C$ be a curve which is Brill--Noether--Petri general. Then $C$ lies on a K3 surface if and only if the Wahl map is nonsurjective.
\end{conjecture-nl}
One direction of this is relatively easy: if $C$ lies on a K3 surface then it follows from the infinitesimal analysis above that the Wahl map is never surjective, see \cite{wahl-jac}. Also see \cite{beauville-merindol} for a simple, direct proof of this fact without deformation theory.

As stated above, Wahl's conjecture needs a slight modification. Arbarello--Bruno--Sernesi proved the following:
\begin{thm-nl}[\cite{arbarello-bruno-sernesi}]
Let $C$ be a Brill--Noether--Petri general curve of genus $g \geq 12$. Then $C$ lies on a projective K3 surface $X \seq \PP^g$, or is a limit of such curves, if and only if the Wahl map is nonsurjective.
\end{thm-nl}
The result above is shown to be optimal in \cite{arbarello-bruno-halphen}. The proof of Arbarello--Bruno--Sernesi proceeds as follows. First of all, they verify that the obstruction group of \cite{wahl-square} vanishes for a Brill--Noether--Petri general curve $C$. Thus if the Wahl map of $C$ is nonsurjective, the cone $Y$ can be deformed to some Gorenstein surface $X' \neq Y$ with canonical curves as hyperplane sections. Such surfaces were classified by Epema in his thesis, \cite{epema}. This boils the task down to deciding when the surfaces appearing in Epema's list can be smoothed (the smoothing of any such surface must be a K3 surface).

In particular, there is one class of surface which features prominently in \cite{arbarello-bruno-sernesi}. These are surfaces whose desingularization is a projective bundle over an elliptic curve. Such surfaces are an excellent candidate for proving the Prym--Green conjecture for the following reasons: $(i)$ they arise as degenerations of smooth K3 surfaces, $(ii)$ their general hyperplane sections are Brill--Noether--Petri general \cite{farkas-tarasca}, $(iii)$ by pulling back bundles from the elliptic curve, we have an abundance of torsion line bundles to work with. 

More precisely, let $E$ be an elliptic curve and set $$X:=\PP(\mathcal{O}_E \oplus \eta)$$ where $\eta \in \text{Pic}^0(E)$ is neither trivial nor torsion. Then $$\phi \; : \; X \to E$$
admits two sections $J_0$ and $J_1$ corresponding to the quotients $\eta$ and $\mathcal{O}_E$ respectively. Let $r \in E$ be general, set $f_r:=\phi^{-1}(r)$ and consider the linear system $|g J_0+f_r|$. The general curve $C \in |g J_0+f_r|$ is smooth of genus $g$ and passes through two base points $x \in J_0$ and $y \in J_1$. If $\widetilde{X}$ denotes the blow-up at these two points, then $$K_{\widetilde{X}} \simeq -(J'_0+J'_1),$$
where $J'_0,J'_1$ are the proper transforms of $J_0,J_1$. The surface $\widetilde{X}$ is the resolution of a limiting K3 surface $Y \seq \PP^g$ with two elliptic singularities.

Now let $b \in E$ be such that $\tau=b-r$ is $\ell$-torsion, write $\eta=a-b$ for some $a \in E$ and set $L=(g-2)J_0+f_a$ on $X$. By adjunction
$$L_C \simeq K_C+\tau$$
and we prove that the paracanonical curve $(C,L_C)$ satisfies the Prym--Green conjecture. One of the crucial new ingredients in the proof is that there is a canonical degeneration of $C$ to $J_0 \cup D$ where $D \in |(g-1) J_0+f_r|$ has genus $g-1$, which allows one to make inductive arguments on the genus.

\section{Divisors on Moduli Spaces and the extremal Betti number of a canonical curve}
It has been known for some time that some of the most interesting loci in the moduli space of curves can be constructed using syzygies. For example,
consider the divisor $$\mathcal{K} := \{ C \in \mathcal{M}_{10} \; | \; \exists L \in \text{Pic}^{12}(C) \; \text{with $h^0(L)=4$ and $b_{0,2}(C,L) \neq 0$} \}$$
in the moduli space of curves of genus $10$. It was shown in \cite{farkas-popa} that the closure of this locus violates the famous Slope Conjecture of Harris--Morrison. In practice, such syzygetic loci tend to give more information about the birational geometry of moduli spaces than other kinds of loci (such as Brill--Noether loci), \cite{farkas-koszul}, \cite{chiodo-eisenbud-farkas-schreyer}.

Rather than using our knowledge of syzygies of curves to describe the geometry of moduli spaces, we can also reverse the process and use cycle calculations on moduli spaces in order to obtain information about syzygies of curves. In \cite{hirschowitz-ramanan}, Hirschowitz--Ramanan construct determinantally a divisor $\mathcal{K}os \seq \mathcal{M}_{2k-1}$ parametrising
$$\{ C \in \mathcal{M}_{2k-1} \; | \; b_{k-1,1}(C,\omega_C) \neq 0 \}$$
and show that it coincides set-theoretically with the divisor $\mathcal{H}ur \seq \mathcal{M}_{2k-1}$ parametrising
$$\{ C \in \mathcal{M}_{2k-1} \; | \; \text{gon}(C) \leq k \}$$
studied by Harris--Mumford, \cite{harris-mumford}. Further, as divisors
$$\mathcal{K}os=(k-1)\mathcal{H}ur \; \in \;  A^1(\mathcal{M}_{2k-1},\mathbb{Q}).$$
Using this, Hirschowitz and Ramanan concluded that Green's Conjecture holds for any curve of genus $g=2k-1$ and, furthermore, Schreyer's Conjecture holds for curves of genus $g=2k-1$ and gonality $k$.

It was discovered by Aprodu that one can use the Hirschowitz--Ramanan computation to obtain results about curves of \emph{arbitrary} gonality, \cite{aprodu-remarks}. Let $C$ be a curve of genus $g$ and non-maximal gonality $k \leq \lfloor \frac{g}{2} \rfloor+2$. Let $W^1_m(C)$ denote the subvariety of $\text{Pic}^m(C)$ consisting of line bundles with at least two sections. We say $C$ satisfies \emph{linear growth} if
$$\dim W^1_{k+n}(C) \leq n \text{  for all $ 0 \leq n \leq g-2k+2$}.$$
Aprodu proved that the general $k$-gonal curve satisfies linear growth (if $k$ is non-maximal) and further:
\begin{thm-nl} [Aprodu]
Let $C$ satisfy linear growth. Then $C$ satisfies Green's Conjecture.
\end{thm-nl}
Aprodu's Theorem is the sharpest known result on Green's conjecture. Further, it was a key step in verifying that Green's Conjecture holds for curves on arbitrary K3 surfaces,
\cite{aprodu-farkas-green}. 

Aprodu's Theorem relies on the following trick with nodal curves (which is a variant of an argument of Voisin \cite{voisin-even}): let $C$ and be in the theorem, with $k=\text{gon}(C)$, choose $n=g+3-2k$ general pairs of points $(x_i,y_i) \in C$, $1 \leq i \leq n$ and let $D$ be the nodal curve obtained from $D$ by identifying $x_i$ and $y_i$. Then $D$ has genus $g+n=2(g-k+1)-1$ and Aprodu shows that the linear growth condition implies $D \notin \overline{\mathcal{H}ur}$. By Hirschowitz--Ramanan's calculation, this implies $b_{g-k+1,1}(D,\omega_D)=0$. But, as Voisin shows, there is a natural injective map
$$\text{Tor}^{g-k+1}_{R_1}(C,\omega_C)_{g-k+2} \hookrightarrow \text{Tor}^{g-k+1}_{R_2}(D,\omega_D)_{g-k+2},$$
where $R_1=\text{Sym} H^0(\omega_C)$, $R_2=\text{Sym} H^0(\omega_D)$. Hence $b_{g-k+1,1}(C,\omega_C)=0$ as predicted by Green's Conjecture.

We now turn back to the problem of attempting to describe the Betti table of a canonical curve of non-maximal gonality $k$. Recall from Section \ref{Schreyer} that if $C$ admits $m$ minimal pencils $f_1, \ldots, f_m$ then the associated scrolls $X_{f_1}, \ldots, X_{f_m}$ each contribute to the syzygies of $C$. Recall further that the ``extremal'' Betti number $b_{g-k,1}(C,\omega_C)$ was singled out by Schreyer's conjecture to be of particular importance.
We prove
\begin{thm} [\cite{in-prep}]
Let $C$ be a smooth curve of genus $g$ and gonality $k \leq \lfloor \frac{g+1}{2} \rfloor$. Assume $C$ admits $m$ minimal pencils, and that the pencils are infinitesimally and geometrically in general position.  Assume further that $C$ satisifies bpf-linear growth. Then 
$$b_{g-k,1}(C,K_C)=m(g-k).$$ 
\end{thm}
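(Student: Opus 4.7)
The plan is to establish the claimed equality in two halves. The lower bound $b_{g-k,1}(C,\omega_C)\geq m(g-k)$ follows from Schreyer's scroll construction applied to each of the $m$ minimal pencils; the matching upper bound requires a moduli-theoretic divisor-class computation in the spirit of Hirschowitz--Ramanan, reached via a Voisin--Aprodu nodal degeneration made possible by the bpf-linear growth hypothesis.

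For the lower bound, each minimal pencil $f_i:C\to\PP^1$ produces a rational normal scroll $X_{f_i}\seq\PP^{g-1}$ whose Eagon--Northcott Betti numbers satisfy $b_{p,1}(X_{f_i},\mathcal{O}(1))=p\binom{g+1-k}{p+1}$, specializing at the extremal position $p=g-k$ to exactly $g-k$ linear syzygies per scroll. The restriction map yields an injection $\mathrm{Tor}^{g-k}(\mathcal{O}_{X_{f_i}},\C)_{g-k+1}\hookrightarrow\mathrm{Tor}^{g-k}(\Gamma_C(\omega_C),\C)_{g-k+1}$, contributing a $(g-k)$-dimensional subspace for every $i$. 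I would then deduce linear independence of the $m$ images from the two general-position hypotheses: geometric general position ensures the $m$ scrolls meet minimally in $\PP^{g-1}$, while infinitesimal general position (transversality of the $f_i$ as points of $W^1_k(C)$) rules out unexpected linear relations amongst the syzygies coming from distinct scrolls. Summing gives $b_{g-k,1}(C,\omega_C)\geq m(g-k)$.

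For the upper bound, I would apply the Voisin--Aprodu technique: using bpf-linear growth, glue a suitable number of general point pairs on $C$ to produce a stable nodal curve $D$ of arithmetic genus $mk+1$, in such a way that each $f_i$ extends to an admissible cover of degree $k$, the $m$-tuple of extended pencils remains in general position, and the extremal Koszul cohomology of $(C,\omega_C)$ injects into that of $(D,\omega_D)$. This reduces the question to bounding the extremal Betti number on a universal family over a Hurwitz-type cover of $\overline{M}_{mk+1}$, or equivalently over a component of a moduli space of stable maps to $\PP^m$ parametrizing curves together with an ordered $m$-tuple of degree-$k$ pencils. On such a moduli one determinantally constructs a Koszul-type divisor $\mathcal{K}$ consisting of pairs for which the extremal Betti number exceeds $m(g-k)$, alongside a Hurwitz-type divisor $\mathcal{H}$ consisting of pairs whose pencil configuration is special (two pencils coincide, one becomes non-reduced, or an unexpected extra pencil appears). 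A Chern-class calculation modelled on Hirschowitz--Ramanan should establish a proportionality $[\mathcal{K}]=c\,[\mathcal{H}]$ in the rational Picard group for an explicit positive integer $c$. Since our curves avoid $\mathcal{H}$ by hypothesis, they also avoid $\mathcal{K}$, completing the proof.

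The hard part will be this class computation on the moduli of stable maps to $\PP^m$ for $m\geq 2$. Unlike $\overline{M}_g$, such spaces exhibit contracted components, reducible boundary strata of non-pure dimension, and potential virtual fundamental class corrections; moreover, the construction must be carried out equivariantly under the symmetric group action permuting the $m$ pencils. Identifying the correct universal Koszul complex, computing its Chern classes via relative Grothendieck--Riemann--Roch on this stack, and verifying the proportionality by evaluating on explicit test families---degenerate Hurwitz covers, and hypersurfaces in universal scrolls $\unihyp$---are precisely the ``peculiarities'' flagged in the introduction, and will demand the most delicate bookkeeping.
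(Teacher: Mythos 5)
Your proposal follows the same strategy that the paper sketches for \cite{in-prep}: the lower bound $b_{g-k,1}(C,\omega_C)\geq m(g-k)$ from the Eagon--Northcott syzygies of the $m$ scrolls, made linearly independent by the two general-position hypotheses (this is precisely what the ``geometrically in general position'' condition on the quadrics $Q_{f_1},\ldots,Q_{f_m}$ is designed to guarantee), and the upper bound by an Aprodu-style nodal degeneration to the balanced case followed by a Hirschowitz--Ramanan-type determinantal class identity on a moduli space of curves equipped with $m$ marked pencils --- your $[\mathcal{K}]=c\,[\mathcal{H}]$ is the paper's $\mathcal{EN}_m=(k-1)\mathcal{BN}_{m+1}$, so $c=k-1$ and the ``Hurwitz-type'' divisor is the locus of an $(m+1)$-st pencil. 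Two concrete corrections. First, your target genus for the degeneration is wrong: identifying $n$ general pairs of points raises the genus to $g+n$ and the gonality to $k+n$, so to land in the balanced case $\tilde{g}=2\tilde{k}-1$ you must take $n=g-2k+1$ (note this is exactly the range appearing in the bpf-linear growth hypothesis), giving arithmetic genus $2g-2k+1$ independent of $m$, not $mk+1$; with this choice the extremal position is preserved, $g-k=\tilde{g}-\tilde{k}=\tilde{k}-1$, and the injection of Koszul cohomology yields $b_{g-k,1}(C,\omega_C)\leq b_{\tilde{k}-1,1}(D,\omega_D)=m(\tilde{k}-1)=m(g-k)$. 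Second, you locate essentially all of the difficulty in the class computation, whereas the paper stresses that the reduction step is itself the delicate point here: extending all $m$ pencils simultaneously across the degeneration while preserving their general position forces one to work with stable maps whose sources acquire unstable components and to invoke the twisting formalism for line bundles on families with reducible central fibre, which is ``significantly more difficult than in Aprodu's case.''
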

In other words, one can read off the number of minimal pencils from the last Betti number in the linear strand, under certain generality assumptions on the minimal pencils. Let us first state the meaning of the assumptions. A curve $C$ of genus $g$ and gonality $k$ satisfies \emph{bpf-linear growth} provided we have the dimension estimates
\begin{align*}
\dim W^1_{k+m}(C) &\leq m,  \;\; \text{for $0 \leq m \leq g-2k+1$} \\
\dim W^{1,\mathrm{bpf}}_{k+m}(C) &<m, \;\; \text{for $0 < m \leq g-2k+1$}, \end{align*} 
where $W^{1,\mathrm{bpf}}_{k+m}(C)$ denotes the locus of base point free line bundles. This condition is a slight strengthening of Aprodu's linear growth assumption. Next, the condition that the pencils are infinitesimally in general position means that the deformation theory of any subset $\sigma=\{ \sigma_i \} \seq \{f_1, \ldots, f_m \}$ of the pencils is unobstructed (modulo the $\text{PGL}(2)$ action). More precisely, setting $F_{\sigma}:=(f_{\sigma_i}) \; : \; C \to (\PP^1)^{|\sigma|}$, we require
$$\text{Ext}^2_C(\Omega^{\bullet}_{F_{\sigma}},\mathcal{O}_C(-p-q-r))=0$$
for all subsets $\sigma$ and general $p,q,r \in C$, where $\Omega^{\bullet}_{F_{\sigma}}$ is the cotangent complex of $F_{\sigma}$. 

Lastly, the condition that the pencils be geometrically in general position is a condition to ensure that the scrolls contribute syzygies independently into the extremal Betti number of the curve. To describe it, choose a general divisor $T$ of degree $g-1-k$ on $C$. Let $Q_{f_1}, \ldots, Q_{f_m}$ be the quadrics obtained by projecting the scrolls $X_{f_1}, \ldots, X_{f_m}$  away from $T$. We say $\{ f_1, \ldots, f_m \}$ is geometrically in general position if the set $$\{ Q_{f_1}, \ldots, Q_{f_m} \} \seq |\mathcal{O}_{\PP^k}(2)|$$ is in general position. 

In practice, these three assumptions seem relatively easy to verify. For instance, they can be checked to hold for a general curve of non-maximal gonality $k$ admitting $m \leq 2$ minimal pencils. When $m \geq 3$, we lack a good understanding of when the moduli space of curves with $m$ minimal pencils is nonempty and irreducible, which makes it harder to approach this case, but computational evidence suggests, for instance, that the assumptions are verified for $g=11$ and $k=6$ provided $1 \leq m \leq 10$, which excludes only the ``sporadic" cases $m=12, 20$ (the assumptions should be satisfied up until the first $m$ where the moduli space of curves with $m$ pencils becomes empty, which in this case is $m=11$).

We just state a few words about the proof, for more details see the forthcoming \cite{in-prep}. The proof works by ultimately reducing to the case $g=2k-1$ using a variant of Aprodu's trick. The reduction is significantly more difficult than in Aprodu's case, as we are required to work with stable maps with unstable curves on the base, and involves the notion of ``twisting'' for line bundles on a family of curves with central fibre a reducible curve, see \cite{farkas-pand}, \cite{BCGGM}. 

In the case $g=2k-1$, in our setting the role of the Koszul divisor of Hirschowitz--Ramanan is replaced with ``Eagon--Northcott" divisors defined on an appropriate moduli space $\mathcal{H}(m)$ of stable maps $C\to (\PP^1)^m$  with three base points. Letting $\mathcal{H}(m) \to \mathcal{H}(1)$, denote the projection to the first factor, these divisors push forward to give codimension $m+1$ cycles
$$\mathcal{EN}_m \in A^{m+1}(\mathcal{H}(1),\mathbb{Q}),$$
where $\H(1)$ is a suitable subset of the space of degree $k$ stable maps $C \to \PP^1$ from curves of genus $2k-1$ (with three base points, to account for the automorphisms of $\PP^1$). These cycles satisfy the identity $$\mathcal{EN}_{m}=(k-1)\mathcal{BN}_{m+1}$$
where $\mathcal{BN}_{m+1} \in A^m(\H(1), \mathbb{Q})$ is a cycle corresponding to curves with $m+1$ minimal pencils. This equation should be seen as a generalization of Hirschowitz--Ramanan's equation 
$$\mathcal{K}os=(k-1)\mathcal{H}ur  \; \in \;  A^1(\mathcal{M}_{2k-1},\mathbb{Q}).$$

\bibliography{biblio}
\end{document}